%
%
%
%

\documentclass[12pt,reqno]{amsart}

\usepackage[left=3.5cm,right=3.5cm]{geometry}
\usepackage{amssymb}
\usepackage{graphicx}
\usepackage{amscd}
\usepackage[hidelinks]{hyperref}
\usepackage{color}
\usepackage{tabularx}
\usepackage[table]{xcolor}
\usepackage{float}
\usepackage{graphics,amsmath,amssymb}
\usepackage{amsthm}
\usepackage{amsfonts}
\usepackage{latexsym}
\usepackage{epsf}
\usepackage{xifthen}
\usepackage{mathrsfs}
\usepackage{dsfont}
\usepackage{makecell}
\usepackage{subfig}
\usepackage{amsmath}
\allowdisplaybreaks[4]
\usepackage{listings}
\usepackage{etoolbox}
\usepackage{fancyhdr}
\usepackage{pdflscape}
\usepackage[title,toc,titletoc]{appendix}
\usepackage{enumitem}
\usepackage[noadjust]{cite}
\usepackage{tikz}
\usetikzlibrary{automata,positioning,arrows}
\usepackage{young}
\usepackage[object=vectorian]{pgfornament} 
\usepackage{lipsum,tikz}
\usepackage{multirow}
\usepackage[OT2,T1]{fontenc}

\setlength{\parskip}{2pt}

\numberwithin{equation}{section}

\theoremstyle{theorem}
\newtheorem{theorem}{Theorem}[section]
\newtheorem*{theorem*}{Theorem}

\newtheorem{corollary}[theorem]{Corollary}

\newtheorem{proposition}[theorem]{Proposition}

\providecommand{\customgenericname}{}
\newcommand{\newcustomtheorem}[2]{%
	\newenvironment{#1}[1]
	{%
		\renewcommand\customgenericname{#2}%
		\renewcommand\theinnercustomgeneric{##1}%
		\innercustomgeneric
	}
	{\endinnercustomgeneric}
}
\newcustomtheorem{ctheorem}{Theorem}
\newcustomtheorem{clemma}{Lemma}

\theoremstyle{definition}
\newtheorem{definition}[theorem]{Definition}

\newtheorem{example}[theorem]{Example}
\newtheorem*{example*}{Example}
\newtheorem*{examples*}{Examples}

\newtheorem*{remark*}{Remark}
\newtheorem*{remarks*}{Remarks}
\newtheorem*{note*}{Note}

\newtheoremstyle{named}{}{}{\itshape}{}{\bfseries}{.}{.5em}{#1\thmnote{ #3}}
\theoremstyle{named}



\DeclareSymbolFont{cyrletters}{OT2}{wncyr}{m}{n}

\DeclareMathSymbol{\Sh}{\mathalpha}{cyrletters}{"58}
\DeclareMathSymbol{\sh}{\mathalpha}{cyrletters}{"78}
\DeclareMathSymbol{\Shh}{\mathalpha}{cyrletters}{"57}
\DeclareMathSymbol{\shh}{\mathalpha}{cyrletters}{"77}

\newcommand{\patt}[2]{\langle #1_{\triangleright}, #2_{\triangleleft} \rangle}
\newcommand{\bpatt}[2]{\big\langle #1_{\triangleright}, #2_{\triangleleft} \big\rangle}

\newcommand{\Prob}{\mathbf{P}}

\newcommand{\bs}{\boldsymbol{s}}


\title[Euler's pentagonal number theorem]{A probabilistic proof of Euler's pentagonal number theorem}

\author[S. Chern]{Shane Chern}
\address{Fakult\"at f\"ur Mathematik, Universit\"at Wien, Oskar-Morgenstern-Platz 1, Wien 1090, Austria}
\email{chenxiaohang92@gmail.com, xiaohangc92@univie.ac.at}

\date{}

\keywords{Euler's pentagonal number theorem, probabilistic proof, shuffling model.}

\subjclass[2020]{60C05, 05A19, 11P84.}

\begin{document}
	
\sloppy

\begin{abstract}
	We present a probabilistic proof of Euler's pentagonal number theorem based on a shuffling model.
\end{abstract}

\maketitle

\begin{flushright}
{\noindent\footnotesize\itshape To George Andrews, a great advisor, on his $\big\lfloor (\tanh 1;\tanh 1)_\infty^{-1} \big\rfloor$-th birthday.}
\end{flushright}

\bigskip\bigskip

\section{Prologue}

Euler's \emph{pentagonal number theorem} is recognized as one of the pioneering discoveries in the analysis of the infinite. Formally, it is the following series expansion.

\begin{theorem}\label{th:PNT}
	We have
	\begin{align}\label{eq:PNT}
		\prod_{j\ge 1} (1-q^j) = \sum_{n=-\infty}^\infty (-1)^n q^{\frac{n(3n+1)}{2}}.
	\end{align}
\end{theorem}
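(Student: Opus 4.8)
The plan is to give each side of \eqref{eq:PNT} a probabilistic meaning in terms of a single random shuffling process and then match them by computing one quantity in two ways. First I would fix a real parameter $0<q<1$ and read the left-hand side as the signed generating function over strict partitions,
\[
\prod_{j\ge1}(1-q^j)=\sum_{\lambda\in\mathcal D}(-1)^{\ell(\lambda)}q^{|\lambda|},
\]
where $\mathcal D$ is the set of partitions into distinct parts, $\ell(\lambda)$ is the number of parts, and $|\lambda|$ its size; proving the theorem as an identity of formal power series then reduces to proving it for each such $q$, since both sides are analytic on $(0,1)$. The aim is to interpret the normalized weights $q^{|\lambda|}\prod_{j\ge1}(1-q^j)$ as the law of a random strict partition produced by the shuffling model, so that $\prod_{j\ge1}(1-q^j)=\Prob(\text{the shuffle outputs }\varnothing)$ and, more generally, the signed sum becomes an expectation $\mathbf E\big[(-1)^{\ell}\big]$ of a parity observable.

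Next I would set up the dynamics. I expect the shuffling step to act on a strict partition through the classical Franklin moves — comparing the smallest part $s(\lambda)$ with the length $\sigma(\lambda)$ of the top staircase, then either detaching the smallest part and laying it along the staircase, or the reverse — realized here as a single measure-preserving transformation $T$ of the weighted configuration space. The point of phrasing it as a shuffle is that $T$ flips the parity of $\ell$ while preserving $|\lambda|$, so on the support where $T$ is a fixed-point-free involution the contributions cancel in pairs inside the expectation. I would then verify that $T$ preserves the measure $q^{|\lambda|}\prod_{j\ge1}(1-q^j)$ (it preserves $|\lambda|$ and is a bijection there), which is exactly the probabilistic content replacing the bookkeeping of a direct involution proof.

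The surviving mass lives on the configurations where the Franklin move is undefined, i.e.\ where the smallest part collides with the top staircase; these are precisely the two families of ``pentagonal'' strict partitions $\{n,n+1,\dots,2n-1\}$ and $\{n+1,n+2,\dots,2n\}$, of sizes $\tfrac{n(3n-1)}2$ and $\tfrac{n(3n+1)}2$ and length $n$. Collecting their weights with sign $(-1)^{n}$ yields $1+\sum_{n\ge1}(-1)^n\big(q^{n(3n-1)/2}+q^{n(3n+1)/2}\big)$, which I would rewrite as the bilateral sum $\sum_{n=-\infty}^{\infty}(-1)^nq^{n(3n+1)/2}$ by sending $n\mapsto -n$ on one family, giving exactly the right-hand side of \eqref{eq:PNT}.

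The hard part will be making the cancellation rigorous at the level of the infinite signed measure rather than a finite involution: one must justify interchanging the (only conditionally convergent) signed sum with the pairing induced by $T$, control the boundary fibers where $s(\lambda)$ and $\sigma(\lambda)$ are comparable, and check that the shuffle is genuinely a bijection there with no unmatched configurations beyond the two pentagonal families. Establishing absolute convergence for $0<q<1$ together with a dominated-convergence justification for the rearrangement, so that $\mathbf E\big[(-1)^\ell\big]$ truly collapses to the pentagonal terms, is where the probabilistic formulation must earn its keep over the purely combinatorial involution.
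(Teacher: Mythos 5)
Your proposal is, at its core, Franklin's involution proof of 1881, and as such it is a correct route to \eqref{eq:PNT} --- but it is a genuinely different route from the paper's, and the probabilistic layer you wrap around it does essentially no work. The paper's shuffling model has nothing to do with Franklin moves on strict partitions: it is a Markov process that at Time $k$ inserts the symbol $k$ at the right end and swaps it leftward with geometric probabilities, so that the event ``a leftmost shuffling occurs at each time in $I$ and an anti-leftmost one at each time in $J$'' has probability exactly $\prod_{i\in I}q^i\prod_{j\in J}(1-q^j)$. The left-hand side of \eqref{eq:PNT} is then literally the probability $\Prob_{\patt{\varnothing}{[1,\infty)}}$ that no leftmost shuffling ever occurs, and the identity is proved by an inductive telescoping of event spaces (using that $\Prob_{\patt{I}{J}}$ depends on $I$ only through $\sum_{i\in I}i$), which the author points out is a direct translation of Euler's own 1760 series manipulation. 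What the paper's approach buys is that every quantity in sight is an honest probability bounded by $1$, so the passage $N\to\infty$ and all convergence issues are automatic; what your approach buys is the combinatorial transparency of the sign-reversing involution. The ``hard part'' you anticipate is also not hard: for $0<q<1$ one has $\sum_{\lambda\in\mathcal{D}}q^{|\lambda|}=\prod_{j\ge1}(1+q^j)<\infty$, so the signed sum over distinct partitions converges absolutely and any pairing-induced rearrangement is valid without dominated convergence; equivalently, Franklin's involution preserves $|\lambda|$ and so acts degree by degree on finite sets.

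One genuine inconsistency in your setup should be fixed if you pursue it: the weight $q^{|\lambda|}\prod_{j\ge1}(1-q^j)$ is a probability measure on the set of \emph{all} partitions (that is how one gets $\prod_{j\ge1}(1-q^j)=\Prob(\lambda=\varnothing)$), but it does \emph{not} sum to $1$ over the set $\mathcal{D}$ of strict partitions --- there it sums to $\prod_{j\ge1}(1-q^{2j})$. So the signed sum $\sum_{\lambda\in\mathcal{D}}(-1)^{\ell(\lambda)}q^{|\lambda|}$ is not an expectation $\mathbf{E}\big[(-1)^{\ell}\big]$ with respect to the measure you wrote down; you would need to renormalize by $\prod_{j\ge1}(1+q^j)^{-1}$ on $\mathcal{D}$, at which point the expectation formulation becomes circular window-dressing on the involution. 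The combinatorial core of your argument stands on its own; the probabilistic framing, as stated, does not.
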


This result was first communicated from Euler to Daniel Bernoulli in a letter dated November 30, 1740\footnote{Euler's original letter, unfortunately, has not been preserved but the lines regarding the pentagonal number theorem were alluded to briefly in Bernoulli's reply dated January 28, 1741. See \cite[Brief 52]{Eul2016}.}; however, no proof was included therein. The seemingly earliest demonstration offered by Euler \cite{Eul1760} was published in 1760, which is a retelling of the idea in his letter to Goldbach \cite[Brief 144]{Eul2015} on June 9, 1750. Euler's proof, as briefly presented at the end of this paper, is inductive, relying on an ingenious arithmetic manipulation of the series.

Ever since its discovery, the pentagonal number theorem has shown broad and profound connections to other fields. As observed by Legendre \cite[pp.~132--133, \S{}458]{Leg1830}, the series expansion \eqref{eq:PNT} may be combinatorially interpreted in terms of integer partitions; this interpretation motivated Franklin's bijective proof \cite{Fra1881} half a century later. Also, Goncharova \cite{Gon1973} noticed that the pentagonal number theorem arises as a natural implication from the calculation of the cohomology of the negative part of the Witt algebra, and it is just a tip of a big picture that bonds Lie algebra, partition theory, and $q$-series. We refer the interested reader to a terse exposition by Andrews \cite{And1983}, and a more detailed summary on Euler's discovery by Bell \cite{Bel2010}.

The objective of the present paper is to interpret the pentagonal number theorem in a different way. To be specific, we propose a shuffling model in the next section and utilize it to provide a new probabilistic proof.

\section{A shuffling model}

Let $\mathbb{N}_+$ be the set of positive integers, and $\mathbb{N}:=\mathbb{N}_+ \cup \{0\}$ the set of natural numbers. Given $a,b \in \mathbb{N}$, we denote by $[a,b]$ the set of natural numbers $n$ such that $a\le n\le b$, and by $[a,\infty)$ the set of natural numbers $n$ such that $n \ge a$.

Throughout, we always assume that $q$ is a real number such that $0<q<1$.

In a classical \emph{probability model} $(\Omega,\Prob)$, the \emph{sample space} $\Omega$ describes the set of all possible outcomes, and the \emph{probability function} $\Prob:\Omega \to [0,1]$ assigns a probability with value taken from $[0,1]$ to each element in $\Omega$ such that
\begin{align*}
	\sum_{\omega\in \Omega}\Prob_{\omega} = 1.
\end{align*}
We may also attach an \emph{event space} $E$, which is a subset of $\Omega$, to the model in order to describe the outcomes of particular interest. By abuse of notation, we denote by $\Prob_E$ the total probability of all elements in the event space $E$. This \emph{probability space} is written as $(\Omega,E,\Prob)$.

Now fix a sequence $\mathfrak{s}_k:=(s_0,\ldots,s_{k-1},s_k)$ of $k+1$ different natural numbers for a certain $k\ge 0$. We define a probability model $\sh_{\mathfrak{s}_k}=(\Omega_{\mathfrak{s}_k},\Prob^{(\mathfrak{s}_k)})$ where the sample space is
\begin{align*}
	\Omega_{\mathfrak{s}_k}=\big\{\bs_0,\bs_1,\ldots,\bs_{k-1},\bs_k\big\},
\end{align*}
with\footnote{It is notable that throughout this paper, \emph{three} different fonts for ``$s$'' are used, including $s$, $\bs$ and $\mathfrak{s}$, and they should be carefully distinguished, especially for the latter two (in \emph{bold} and \emph{Fraktur} styles).}
\begin{align}\label{eq:sh-Omega}
	\bs_i = \bs_i(\mathfrak{s}_k) := \begin{cases}
		(s_0,\ldots,s_{k-1},s_k) & \text{if $i=0$},\\
		(s_0,\ldots s_{k-i-1},s_k,s_{k-i},\ldots,s_{k-1}) & \text{if $i\in [1,k-1]$},\\
		(s_k,s_0,\ldots,s_{k-1}) & \text{if $i=k$}.
	\end{cases}
\end{align}
The probability function is then assigned by
\begin{align}\label{eq:prob-sh-k}
	\Prob_{\bs_i}^{(\mathfrak{s}_k)} = \begin{cases}
		1-q & \text{if $i=0$},\\
		q^i(1-q) & \text{if $i\in [1,k-1]$},\\
		q^k & \text{if $i=k$}.
	\end{cases}
\end{align}

Note that the outcomes in the sample space are sequences permuted by the original one. Intuitively, we \emph{swap} $s_k$ with its preceding entry with a probability $q$ through a step-by-step procedure, and the program is terminated when a certain step of swapping is not executed or $s_k$ already reaches the leftmost position.

This procedure more or less aligns with \emph{card shuffling}. So although less accurate\footnote{The term \emph{shuffle} is usually reserved for permutations where one divides a sequence into a front and a tail part and then forms a permutation in which the order of the elements from the front part is preserved, and also the order of the elements from the tail part (thus imitating a riffle shuffle of cards). For example, in the \emph{shuffle algebra}, the multiplication is exactly defined using this shuffle.}, in the present paper, for $S$ a subset of $\mathbb{N}$, we say a sequence $\mathfrak{s}:=(s_0,s_1,s_2,\ldots)$ is a \emph{shuffle} (instead of using the usual term of \emph{permutation}) of $S$ if every number in $\mathfrak{s}$ belongs to $S$ and every number in $S$ appears exactly once in $\mathfrak{s}$.

We then describe our \emph{shuffling model} $\Sh$ according to a \emph{Markov process}:

\begin{definition}[Markov process for the shuffling model $\Sh$]\label{def:Markov}
	~
	
	\begin{itemize}[leftmargin=*,align=left,itemsep=3pt]
		\item {\bfseries\boldmath Time $0$:} We begin with the sequence $(0)$;
		
		\item {\bfseries\boldmath Time $k$ ($k\ge 1$):} For the sequence $\mathfrak{s}_{k-1}=(s_0,\ldots,s_{k-1})$ derived at Time $k-1$, we append the number $k$ to the right end to get $\hat{\mathfrak{s}}_{k}=(s_0,\ldots,s_{k-1},k)$ (so that the new sequence $\hat{\mathfrak{s}}_{k}$ is of length $k+1$) and then apply the probability model $\sh_{\hat{\mathfrak{s}}_{k}}$.
	\end{itemize}
\end{definition}

If we only look at the outcomes of this stochastic process at Time $k$ and denote the associated probability model as $\Sh_k:=(\Omega_k,\Prob^{(k)})$, it is clear that the sample space $\Omega_k$ covers all shuffles of $[0,k]$. Let $\mathfrak{s}_{k}=(s_0,\ldots s_{k-i-1},k,s_{k-i},\ldots,s_{k-1})$ be such a shuffle wherein $i\in [0,k]$. Then by removing $k$, the sequence $\mathfrak{s}_{k-1}=(s_0,\ldots,s_{k-1})$ is a shuffle of $[0,k-1]$ according to the construction of the Markov process, and more importantly, it is \emph{exactly} the one derived at Time $k-1$ in this Markov process, thereby lying in the sample space of $\Sh_{k-1}$. We also iteratively have the probability evaluation:
\begin{align*}
	\Prob_{\mathfrak{s}_{k}}^{(k)} = \Prob_{\mathfrak{s}_{k-1}}^{(k-1)}\cdot P_i,
\end{align*}
where $\Prob_{\mathfrak{s}_{k}}^{(k)}$ denotes the probability of the event $\mathfrak{s}_{k}$ with respect to the probability model $\Sh_{k}$ and
\begin{align*}
	P_i = \begin{cases}
		1-q & \text{if $i=0$},\\
		q^i(1-q) & \text{if $i\in [1,k-1]$},\\
		q^k & \text{if $i=k$},
	\end{cases}
\end{align*}
according to the probability model $\sh_{\hat{\mathfrak{s}}_k}$.

Now we read our \emph{shuffling model} $\Sh = (\Omega,\Prob)$ as $\Sh_{\infty}$. Then the sample space $\Omega$ of $\Sh$ covers all shuffles of $\mathbb{N}$. 

\begin{example}
	Consider the shuffle $(2,0,3,1,4,5,6,7\ldots)$ of $\mathbb{N}$. Then in the Markov process, at Time $0$ the resulting event is $(0)$; at Time $1$ the resulting event is $(0,1)$; at Time $2$ the resulting event is $(2,0,1)$; at Time $3$ the resulting event is $(2,0,3,1)$; at Time $4$ the resulting event is $(2,0,3,1,4)$; and so on.
\end{example}

\section{Event spaces}

In this section, we describe the event spaces of our interest attached to the shuffling model $\Sh=(\Omega,\Prob)$.

For $\mathfrak{s}_k=(s_0,\ldots,s_k)$ a generic sequence of different natural numbers of length $k+1$, recall that we have defined a probability model $\sh_{\mathfrak{s}_k}=(\Omega_{\mathfrak{s}_k},\Prob^{(\mathfrak{s}_k)})$ in the previous section, where we have in particular labeled elements in the sample space $\Omega_{\mathfrak{s}_k}$ by $\bs_0,\ldots,\bs_k$ as recorded in \eqref{eq:sh-Omega}. Now in this model, we further attach an event space $E_{\mathfrak{s}_k}\subset \Omega_{\mathfrak{s}_k}$ and call the probability space $\shh_k = \shh_k(\mathfrak{s}_k) := (\Omega_{\mathfrak{s}_k},E_{\mathfrak{s}_k},\Prob^{(\mathfrak{s}_k)})$ a \emph{shuffling (restricted to $E_{\mathfrak{s}_k}$)}. For any element in $\Omega_{\mathfrak{s}_k}$, we say it is \emph{produced} by this shuffling $\shh_k$ if it belongs to $E_{\mathfrak{s}_k}$; otherwise, it is \emph{not} produced by $\shh_k$.

We make the following categorization:
\begin{itemize}[leftmargin=*,align=left,itemsep=3pt]
	\item \textsc{Type} $\triangleright$: The shuffling $\shh_k$ is \emph{leftmost} if the event space $E_{\mathfrak{s}_k}$ is $\{\bs_k\}$;
	
	\item \textsc{Type} $\triangleleft$: The shuffling $\shh_k$ is \emph{anti-leftmost} if the event space $E_{\mathfrak{s}_k}$ is $\{\bs_0,\ldots,\bs_{k-1}\}$;
	
	\item \textsc{Type} $\scalebox{0.725}{$\lozenge$}$: The shuffling $\shh_k$ is \emph{free} if the event space $E_{\mathfrak{s}_k}$ is $\{\bs_0,\ldots,\bs_{k-1},\bs_k\}$.
\end{itemize}

Next, we move on to the shuffling model $\Sh$. Recall that $\Sh$ is defined based on a Markov process introduced in Definition~\ref{def:Markov} of the previous section. Given any outcome $\mathfrak{s}\in \Omega$ of the model $\Sh$, it is derived by the following infinite chain
\begin{align*}
	\mathfrak{s}_0 \to \mathfrak{s}_1 \to \cdots \to \mathfrak{s}_k \to \cdots,
\end{align*}
where, for the moment, $\mathfrak{s}_k$ is the shuffle obtained at Time $k$ of our Markov process. Now it is notable that $\mathfrak{s}_k$, called the \emph{projection} of $\mathfrak{s}$ onto the model $\Sh_k$, belongs to the sample space $\Omega_k$. In light of what has been done at Time $k$ of the Markov process, $\mathfrak{s}_k$ may also be viewed as an outcome of the model $\sh_{\hat{\mathfrak{s}}_k}$ where $\hat{\mathfrak{s}}_k = (s_0,\ldots,s_{k-1},k)$ with $(s_0,\ldots,s_{k-1}) =: \mathfrak{s}_{k-1}$.

The takeaway in our analysis is that once $\mathfrak{s}_{k-1}\in \Omega_{k-1}$ is determined at Time $k-1$ of the Markov process, we will not only focus on all possible outcomes of the above $\sh_{\hat{\mathfrak{s}}_k}$. Instead, we look at a shuffling $\shh_k = \shh_k(\hat{\mathfrak{s}}_k) = (\Omega_{\hat{\mathfrak{s}}_k},E_{\hat{\mathfrak{s}}_k},\Prob^{(\hat{\mathfrak{s}}_k)})$ restricted to a certain event space $E_{\hat{\mathfrak{s}}_k}\subset \Omega_{\hat{\mathfrak{s}}_k}$. More precisely, we require that $\shh_k$ belongs to one of the three types introduced earlier: ``leftmost'', ``anti-leftmost'', or ``free''. By doing so, our attention is indeed restricted to an event space $E_k \subset \Omega_k$ attached to the model $\Sh_k$.

\begin{example}
	Consider the Markov process where the outcomes at Time $0$, $1$, and $2$ are given by
	\begin{align*}
		(0) \to (0,1) \to (2,0,1) \to \cdots.
	\end{align*}
	Then at Time $3$,
	\begin{itemize}[leftmargin=*,align=left,itemsep=3pt]
		\item The outcome produced by a leftmost shuffling $\shh_3$ includes $(3,2,0,1)$;
		
		\item The outcomes produced by an anti-leftmost shuffling $\shh_3$ include $(2,3,0,1)$, $(2,0,3,1)$, and $(2,0,1,3)$;
		
		\item The outcomes produced by a free shuffling $\shh_3$ include $(3,2,0,1)$, $(2,3,0,1)$, $(2,0,3,1)$, and $(2,0,1,3)$.
	\end{itemize}
\end{example}

The same idea may be repeated at each time of the Markov process.

For the moment, let us mark each index in the \emph{time set} $\mathbb{N}_+$ with a chosen subscript:
\begin{align*}
	\mathbf{T} := \big\{k_{\square_k}:k\in \mathbb{N}_+\big\},
\end{align*}
where $\square_k \in \{\triangleright,\triangleleft,\scalebox{0.8}{$\lozenge$}\}$; such $\mathbf{T}$ is called a \emph{marked time set}.

Now starting with $E_0(\mathbf{T})=\big\{(0)\big\}=\Omega_0$, we may iteratively construct an event space $E_k(\mathbf{T})\subset \Omega_k$ attached to the model $\Sh_k$ by
\begin{align*}
	E_k(\mathbf{T}) := \bigcup_{\mathfrak{s}_{k-1} \in E_{k-1}(\mathbf{T})}\left\{\begin{gathered}
		\text{outcomes of the shuffling}\\
		\text{$\shh_k = (\Omega_{\hat{\mathfrak{s}}_k},E_{\hat{\mathfrak{s}}_k},\Prob^{(\hat{\mathfrak{s}}_k)})$ of Type $\square_k$}
	\end{gathered}\right\},
\end{align*}
where as before we write $\hat{\mathfrak{s}}_k = (s_0,\ldots,s_{k-1},k)$ with $(s_0,\ldots,s_{k-1}) =: \mathfrak{s}_{k-1}$. Eventually, we arrive at an event space attached to our shuffling model $\Sh = (\Omega,\Prob)$:
\begin{align*}
	E(\mathbf{T}) := E_\infty(\mathbf{T}) \subset \Omega.
\end{align*}

\begin{example}
	We fix the marked time set as $\{1_{\lozenge},2_{\triangleleft},3_{\triangleright},\ldots\}$. Since we witness a free ($\scalebox{0.8}{$\lozenge$}$) shuffling at Time $1$, then
	\begin{align*}
		E_1(\{1_{\lozenge},2_{\triangleleft},3_{\triangleright},\ldots\}) = \big\{(0,1), (1,0)\big\};
	\end{align*}
	next, since we witness an anti-leftmost ($\triangleleft$) shuffling at Time $2$, then
	\begin{align*}
		E_2(\{1_{\lozenge},2_{\triangleleft},3_{\triangleright},\ldots\}) = \big\{(0,1,2), (0,2,1), (1,0,2), (1,2,0)\big\};
	\end{align*}
	next, since we witness a leftmost ($\triangleright$) shuffling at Time $3$, then
	\begin{align*}
		E_3(\{1_{\lozenge},2_{\triangleleft},3_{\triangleright},\ldots\}) = \big\{(3,0,1,2), (3,0,2,1), (3,1,0,2), (3,1,2,0)\big\}.
	\end{align*}
\end{example}

If we define
\begin{align*}
	I&:=\big\{k\in \mathbb{N}_+: \square_k = \triangleright\big\},\\
	J&:=\big\{k\in \mathbb{N}_+: \square_k = \triangleleft\big\},
\end{align*}
it is clear that $I$ and $J$, as subsets of $\mathbb{N}_+$, are \emph{disjoint}. Conversely, suppose $I$ and $J$ are two disjoint subsets of $\mathbb{N}_+$. Then we may recover a marked time set $\{1_{\square_1}, 2_{\square_2}, 3_{\square_3}, \ldots\}$ according to the rule:
\begin{align*}
	\square_k = \begin{cases}
		\triangleright & \text{if $k\in I$},\\
		\triangleleft & \text{if $k\in J$},\\
		\scalebox{0.8}{$\lozenge$} & \text{if $k\not\in I$ and $k\not\in J$}.
	\end{cases}
\end{align*}
Accordingly, we have an event space $E(\{1_{\square_1}, 2_{\square_2}, 3_{\square_3}, \ldots\})$ attached to the model $\Sh$, and we abbreviate it by the notation:
\begin{align*}
	\bpatt{I}{J} := E(\{1_{\square_1}, 2_{\square_2}, 3_{\square_3}, \ldots\}).
\end{align*}

\begin{example}
	Consider the shuffle
	\begin{align*}
		(6, 1, 0, 4, 2, 3, 5, 7, 8, 9, 10, \ldots) \in \Omega.
	\end{align*}
	It belongs to the event spaces $\bpatt{\varnothing}{\varnothing}$, $\bpatt{\{1\}}{\varnothing}$, $\bpatt{\{1,6\}}{\varnothing}$, $\bpatt{\varnothing}{\{2\}}$, $\bpatt{\varnothing}{\{4,5\}}$, $\bpatt{\{1\}}{\{2,4\}}$ (and many others) but not the space $\bpatt{\{2\}}{\varnothing}$ (because the projection of shuffles in this space onto $\Sh_2$ should be either $(2,0,1)$ or $(2,1,0)$) nor $\bpatt{\varnothing}{\{1\}}$ (because the projection of shuffles in this space onto $\Sh_1$ should be $(0,1)$).
\end{example}

We close this section with two observations.

\begin{proposition}\label{prop:disjoint}
	Let $I$ and $J$ be two disjoint subsets of $\mathbb{N}_+$, and $I'$ and $J'$ another two disjoint subsets of $\mathbb{N}_+$. If there exists a positive integer $k$ such that $k\in I$ and $k\in J'$, then
	\begin{align}
		\bpatt{I}{J} \cap \bpatt{I'}{J'} = \varnothing.
	\end{align}
\end{proposition}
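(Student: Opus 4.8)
The plan is to exploit the defining feature of the marked-time-set construction: at each Time $k$, the event space $E_k(\mathbf{T})$ is built by restricting, for every predecessor $\mathfrak{s}_{k-1}$, to outcomes of a shuffling of a prescribed Type $\square_k$. The position of the newly appended integer $k$ within a shuffle therefore encodes exactly which type governed Time $k$. Concretely, if $\mathfrak{s}_k = (s_0,\ldots,s_{k-i-1},k,s_{k-i},\ldots,s_{k-1})$ is the projection of $\mathfrak{s}$ onto $\Sh_k$, then $k$ sits in the leftmost position (i.e. $i=k$) precisely for a Type $\triangleright$ shuffling, and in a non-leftmost position (i.e. $i\in[0,k-1]$) precisely for a Type $\triangleleft$ shuffling; a Type $\scalebox{0.8}{$\lozenge$}$ shuffling imposes no constraint. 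First I would make this dictionary precise as a small lemma: membership $\mathfrak{s}\in\bpatt{I}{J}$ forces, for each $k\in I$, that $k$ occupies the leftmost slot of the projection $\mathfrak{s}_k$, and for each $k\in J$, that $k$ does \emph{not} occupy the leftmost slot of $\mathfrak{s}_k$.

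With that dictionary in hand, the proposition becomes almost immediate. Suppose $k\in I\cap J'$. If some $\mathfrak{s}$ lay in both $\bpatt{I}{J}$ and $\bpatt{I'}{J'}$, then consider its projection $\mathfrak{s}_k$ onto $\Sh_k$, which is a single well-defined shuffle of $[0,k]$ determined by $\mathfrak{s}$ alone (independently of which marked time set we view $\mathfrak{s}$ through). Membership in $\bpatt{I}{J}$ with $k\in I$ demands that $k$ be in the leftmost position of $\mathfrak{s}_k$; membership in $\bpatt{I'}{J'}$ with $k\in J'$ demands that $k$ be \emph{not} in the leftmost position of the \emph{same} shuffle $\mathfrak{s}_k$. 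These are contradictory, so no such $\mathfrak{s}$ exists and the intersection is empty.

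The key structural point I would emphasize is that the projection $\mathfrak{s}_k$ of a given outcome $\mathfrak{s}$ is intrinsic to $\mathfrak{s}$ and does not depend on the marked time set: it is simply obtained by deleting all entries exceeding $k$ from $\mathfrak{s}$ and reading off the order of $\{0,1,\ldots,k\}$. This is what lets me compare the two membership conditions against one and the same object. I would verify this by unwinding Definition~\ref{def:Markov}, noting that appending and shuffling at later times never alters the relative order of $\{0,\ldots,k\}$ established by Time $k$ — the swapping procedure of \eqref{eq:sh-Omega} only moves the freshly appended largest element leftward and preserves the relative order of all previously placed entries.

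The main obstacle, such as it is, lies entirely in the bookkeeping of the first step: carefully extracting from the iterative definition of $E_k(\mathbf{T})$ the clean positional characterization of the roles of $I$ and $J$, and confirming that the three types ($\triangleright$, $\triangleleft$, $\scalebox{0.8}{$\lozenge$}$) translate correctly into ``leftmost,'' ``non-leftmost,'' and ``unrestricted'' constraints on the position of $k$ within the projection. Once that characterization is established, the disjointness claim follows from the logical incompatibility of ``leftmost'' and ``non-leftmost'' for a single integer $k$ in a single shuffle, with no further computation required.
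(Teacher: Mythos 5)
Your proposal is correct and follows essentially the same route as the paper: project the hypothetical common shuffle $\mathfrak{s}$ onto $\Sh_k$ and observe that $k\in I$ forces the projection to be produced by a leftmost shuffling while $k\in J'$ forces it to be produced by an anti-leftmost one, which is impossible. Your added care in making the positional dictionary explicit and in checking that the projection is intrinsic to $\mathfrak{s}$ merely spells out what the paper's one-paragraph argument takes for granted.
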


\begin{proof}
	Assume that there exists a certain shuffle $\mathfrak{s}$ in both $\bpatt{I}{J}$ and $\bpatt{I'}{J'}$. We project $\mathfrak{s}$ onto the model $\Sh_k$ to get a shuffle $\mathfrak{s}_k\in \Omega_k$. Since $\mathfrak{s} \in \bpatt{I}{J}$ and $k\in I$, the shuffle $\mathfrak{s}_k$ is produced by a leftmost shuffling at Time $k$. Meanwhile, we see from the assumptions $\mathfrak{s} \in \bpatt{I'}{J'}$ and $k\in J'$ that $\mathfrak{s}_k$ is also produced by an anti-leftmost shuffling. However, the two properties cannot occur simultaneously, and hence we are led to a contradiction.
\end{proof}

Next, we recall that the union of two sets $S_1$ and $S_2$ is written as $S_1 \sqcup S_2$ once $S_1$ and $S_2$ are disjoint; this union is called a \emph{disjoint union}.

\begin{proposition}\label{prop:IJK-event}
	Let $I$ and $J$ be disjoint subsets of $\mathbb{N}_+$. Then for any subset $K$ of $\mathbb{N}_+$ that is disjoint from both $I$ and $J$,
	\begin{align}
		\bpatt{I}{J} = \bigsqcup_{K'\subset K}\bpatt{(I\sqcup K')}{(J\sqcup (K\backslash K'))}.
	\end{align}
	In particular, if $K$ is a singleton, namely, $K=\{k\}$ for a certain $k\in \mathbb{N}_+$, then
	\begin{align}
		\bpatt{I}{J} = \bpatt{(I\sqcup \{k\})}{J} \sqcup \bpatt{I}{(J\sqcup \{k\})}.
	\end{align}
\end{proposition}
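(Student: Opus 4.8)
The plan is to reduce everything to a single atomic fact about one shuffling step: a free shuffling is the disjoint union of a leftmost shuffling and an anti-leftmost shuffling. Concretely, reading off \eqref{eq:sh-Omega}, the sample space of $\sh_{\hat{\mathfrak{s}}_k}$ splits as
\begin{align*}
	\Omega_{\hat{\mathfrak{s}}_k} = \{\bs_k\} \sqcup \{\bs_0,\ldots,\bs_{k-1}\},
\end{align*}
where the first block is precisely the event space of a \textsc{Type} $\triangleright$ shuffling and the second that of a \textsc{Type} $\triangleleft$ shuffling. Thus at every time the free choice is partitioned according to whether or not $k$ is sent to the leftmost position, and the whole proposition should follow by propagating this partition through the Markov process.

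First I would record a characterization of $\bpatt{I}{J}$ purely in terms of projections. Unwinding the iterative definition of $E_k(\mathbf{T})$ by induction on $k$, one sees that $\mathfrak{s} \in \bpatt{I}{J}$ if and only if, for every $k \in I$, the projection $\mathfrak{s}_k$ onto $\Sh_k$ places $k$ at the leftmost position, and for every $k \in J$ it places $k$ at a non-leftmost position, with no condition imposed for indices outside $I \cup J$. The base case $E_0(\mathbf{T}) = \{(0)\}$ is immediate, and the inductive step is exactly the statement that $\mathfrak{s}_k \in E_k(\mathbf{T})$ precisely when its projection $\mathfrak{s}_{k-1}$ lies in $E_{k-1}(\mathbf{T})$ and $\mathfrak{s}_k$ is produced by a shuffling of Type $\square_k$ built on $\mathfrak{s}_{k-1}$.

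Next I would settle the singleton case $K = \{k\}$, which is the heart of the matter. Disjointness of the two summands is immediate from Proposition \ref{prop:disjoint}, since $k$ lies in the $\triangleright$-index set of the first summand and in the $\triangleleft$-index set of the second. For the equality, the three event spaces arise from marked time sets that agree except at time $k$, where $\bpatt{I}{J}$ uses a free shuffling while the two summands use a leftmost and an anti-leftmost shuffling respectively; invoking the atomic splitting above together with the projection characterization, so that every shuffle either sends $k$ leftmost or does not, shows that the free event is the disjoint union of the other two.

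Finally, for arbitrary $K$ I would pass directly through the projection characterization rather than iterating the singleton case. To each $\mathfrak{s} \in \bpatt{I}{J}$ assign the set $K' := \{k \in K : \mathfrak{s}_k \text{ places } k \text{ leftmost}\} \subset K$; then $\mathfrak{s}$ satisfies exactly the conditions defining $\bpatt{(I\sqcup K')}{(J\sqcup(K\backslash K'))}$, and conversely each such space lies in $\bpatt{I}{J}$ because it merely imposes extra constraints at the free indices $K$. Two distinct choices $K'_1 \neq K'_2$ differ at some $k \in K$, which then lies in the $\triangleright$-set of one space and the $\triangleleft$-set of the other, so Proposition \ref{prop:disjoint} forces the corresponding spaces to be disjoint. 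The main obstacle is precisely the passage to an \emph{infinite} $K$: a naive induction on $\lvert K \rvert$ does not close, so the projection characterization is essential in order to treat all indices of $K$ simultaneously and to guarantee that the atomic splitting survives through every later time of the Markov process.
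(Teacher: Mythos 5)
Your proof is correct and rests on exactly the same key fact as the paper's one-sentence proof: at each time $k\in K$, the event space of a free shuffling is the disjoint union of the event spaces of a leftmost and an anti-leftmost shuffling. Your projection characterization and the explicit handling of infinite $K$ merely make explicit the propagation through the Markov process that the paper leaves implicit.
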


\begin{proof}
	We only need the fact that at an arbitrary Time $k\in K$, the event space of a free shuffling $\shh_k$ is the disjoint union of the event spaces of a leftmost $\shh_k$ and an anti-leftmost $\shh_k$.
\end{proof}

\section{Probabilities}

In the previous section, we have determined the desired event spaces $\bpatt{I}{J}$ attached to the shuffling model $\Sh=(\Omega,\Prob)$ where $I,J\subset \mathbb{N}_+$ are disjoint. Now our task is to analyze the probability $\Prob_{\patt{I}{J}}$.

Recall that the model $\Sh$ is defined according to a Markov process. In particular, at Time $k$, we focus on a shuffling $\shh_k = \shh_k(\hat{\mathfrak{s}}_k)$, which is a probability space $(\Omega_{\hat{\mathfrak{s}}_k},E_{\hat{\mathfrak{s}}_k},\Prob^{(\hat{\mathfrak{s}}_k)})$ revolving around a certain event space $E_{\hat{\mathfrak{s}}_k}$, where $\mathfrak{s}_{k-1}=(s_0,\ldots,s_{k-1})$ is the outcome at Time $k-1$ and $\hat{\mathfrak{s}}_k = (s_0,\ldots,s_{k-1},k)$. Note that if $\shh_k$ is of one of the three types categorized in the previous section, the event space $E_{\hat{\mathfrak{s}}_k}$ is also decided accordingly. In light of the probability function \eqref{eq:prob-sh-k}, we compute that
\begin{align*}
	\Prob_{E_{\hat{\mathfrak{s}}_k}}^{(\hat{\mathfrak{s}}_k)} = \begin{cases}
		q^k & \text{if $\shh_k$ is leftmost},\\
		1-q^k & \text{if $\shh_k$ is anti-leftmost},\\
		1 & \text{if $\shh_k$ is free}.
	\end{cases}
\end{align*}
In particular, this probability evaluation is \emph{independent} of $\hat{\mathfrak{s}}_k$, or equivalently, the outcome $\mathfrak{s}_{k-1}$ at Time $k-1$.

This independence, together with the nature of a Markov process, immediately gives the following probability evaluation for the event space $\bpatt{I}{J}$ attached to the shuffling model $\Sh$.

\begin{proposition}\label{prop:prob-expression}
	Let $I$ and $J$ be disjoint subsets of $\mathbb{N}_+$. Then
	\begin{align}
		\Prob_{\patt{I}{J}} = \prod_{i\in I} q^i \prod_{j\in J} (1-q^j).
	\end{align}
\end{proposition}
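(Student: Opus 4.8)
The plan is to exploit the Markov structure of the model $\Sh$ together with the crucial independence observation recorded just above the statement. Recall that the event space $\bpatt{I}{J}$ is defined, via the marked time set $\mathbf{T}=\{1_{\square_1},2_{\square_2},\ldots\}$ with $\square_k=\triangleright$ for $k\in I$, $\square_k=\triangleleft$ for $k\in J$, and $\square_k=\scalebox{0.8}{$\lozenge$}$ otherwise, as the inverse limit $E_\infty(\mathbf{T})$ of the finite event spaces $E_k(\mathbf{T})\subset\Omega_k$. So the natural strategy is to compute the finite probability $\Prob^{(k)}_{E_k(\mathbf{T})}$ first and then pass to the limit $k\to\infty$.

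First I would set up the induction on $k$. The base case is $E_0(\mathbf{T})=\{(0)\}=\Omega_0$, whose probability is $1$. For the inductive step, I would use the recursive construction
\begin{align*}
	E_k(\mathbf{T}) = \bigcup_{\mathfrak{s}_{k-1}\in E_{k-1}(\mathbf{T})}\big\{\text{outcomes of the shuffling $\shh_k(\hat{\mathfrak{s}}_k)$ of Type $\square_k$}\big\},
\end{align*}
which expresses $E_k(\mathbf{T})$ as a disjoint union (over distinct extensions $\mathfrak{s}_{k-1}$) of the Type-$\square_k$ event spaces $E_{\hat{\mathfrak{s}}_k}$. Because the model is Markovian, the probability of each outcome in $\Omega_k$ factors as $\Prob^{(k)}_{\mathfrak{s}_k}=\Prob^{(k-1)}_{\mathfrak{s}_{k-1}}\cdot P_i$, exactly as displayed in the description of the Markov process. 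Summing over the branch $E_{\hat{\mathfrak{s}}_k}$ attached to a fixed $\mathfrak{s}_{k-1}$ therefore yields $\Prob^{(k-1)}_{\mathfrak{s}_{k-1}}$ times the local conditional probability $\Prob^{(\hat{\mathfrak{s}}_k)}_{E_{\hat{\mathfrak{s}}_k}}$. The key point — the independence observation stated immediately before the proposition — is that this local factor equals $q^k$, $1-q^k$, or $1$ according as $\square_k$ is $\triangleright$, $\triangleleft$, or $\scalebox{0.8}{$\lozenge$}$, \emph{uniformly} in the particular $\mathfrak{s}_{k-1}$. Hence it pulls out of the sum, giving
\begin{align*}
	\Prob^{(k)}_{E_k(\mathbf{T})} = \Prob^{(k-1)}_{E_{k-1}(\mathbf{T})}\cdot
	\begin{cases}
		q^k & \text{if $k\in I$},\\
		1-q^k & \text{if $k\in J$},\\
		1 & \text{otherwise}.
	\end{cases}
\end{align*}
Unwinding the telescoping recursion over $k=1,\ldots,N$ produces the finite product $\prod_{i\in I,\,i\le N}q^i\prod_{j\in J,\,j\le N}(1-q^j)$.

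The remaining step is to take $N\to\infty$ and conclude $\Prob_{\patt{I}{J}}=\prod_{i\in I}q^i\prod_{j\in J}(1-q^j)$. Since $0<q<1$, every factor lies in $(0,1]$, and the tail factors $q^i$ (for large $i\in I$) and $1-q^j$ (for large $j\in J$) converge to the stated infinite products; the empty-product convention handles the cases where $I$ or $J$ is finite or empty. The point I expect to require the most care is the passage to the limit: I must argue that $\Prob_{\patt{I}{J}}=\lim_{N\to\infty}\Prob^{(N)}_{E_N(\mathbf{T})}$, i.e.\ that the probability of the limiting event $E_\infty(\mathbf{T})\subset\Omega$ is the limit of the probabilities of its finite projections $E_N(\mathbf{T})$. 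This is a standard continuity-of-measure fact (the projections form a decreasing-cylinder description of $\bpatt{I}{J}$, so their probabilities converge to $\Prob_{\patt{I}{J}}$), but it is the one place where one genuinely invokes the infinite model $\Sh=\Sh_\infty$ rather than pure algebra, so I would state it explicitly. Everything else is the telescoping bookkeeping driven by the independence of the local factor from the history.
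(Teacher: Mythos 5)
Your proposal is correct and follows exactly the route the paper intends: the paper derives the proposition ``immediately'' from the observation that the local factor $\Prob^{(\hat{\mathfrak{s}}_k)}_{E_{\hat{\mathfrak{s}}_k}}$ equals $q^k$, $1-q^k$, or $1$ independently of the history, combined with the Markov structure, and your telescoping induction plus the limit $N\to\infty$ is just the honest write-up of that one-line argument. The only thing you add beyond the paper is the explicit continuity-of-measure step, which the paper glosses over but which you correctly identify and handle.
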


Proposition \ref{prop:prob-expression} has two implications.

\begin{corollary}\label{coro:I-infinite}
	Let $I$ and $J$ be two disjoint subsets of $\mathbb{N}_+$ with $I$ being infinite. Then
	\begin{align}
		\Prob_{\patt{I}{J}} = 0.
	\end{align}
\end{corollary}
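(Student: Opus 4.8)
The plan is to invoke Proposition~\ref{prop:prob-expression} directly and then show that the infinite product over $I$ forces the probability to vanish. First I would recall that Proposition~\ref{prop:prob-expression} gives the exact formula
\begin{align*}
	\Prob_{\patt{I}{J}} = \prod_{i\in I} q^i \prod_{j\in J} (1-q^j),
\end{align*}
which holds for any disjoint $I,J\subset\mathbb{N}_+$, including the case where $I$ is infinite. The strategy is then to argue that the factor $\prod_{i\in I}q^i$ equals $0$, and that the remaining factor $\prod_{j\in J}(1-q^j)$ stays bounded, so that the whole product collapses to $0$.

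Next I would handle the factor coming from $J$. Since $0<q<1$, each term satisfies $0 < 1-q^j \le 1$, so the partial products $\prod_{j\in J,\, j\le N}(1-q^j)$ form a nonincreasing sequence bounded below by $0$; in particular this product converges to a value in $[0,1]$ and is therefore bounded. (One does not even need convergence here — boundedness above by $1$ suffices.) The point is simply that this factor cannot blow up.

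The main computation is the factor $\prod_{i\in I}q^i$. Writing $\sigma_N := \sum_{i\in I,\, i\le N} i$, we have $\prod_{i\in I,\, i\le N} q^i = q^{\sigma_N}$. Because $I$ is an infinite subset of $\mathbb{N}_+$, its elements are unbounded, so $\sigma_N \to \infty$ as $N\to\infty$. Since $0<q<1$, we conclude $q^{\sigma_N}\to 0$, whence $\prod_{i\in I}q^i = 0$. I expect this to be the only step requiring any care, and the care is merely in justifying that the sum of an infinite set of positive integers diverges — which is immediate since $I$ infinite forces $\sum_{i\in I} i \ge \sum_{m=1}^{|I\cap[1,N]|} m \to\infty$, or even more simply since $I$ contains arbitrarily large integers.

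Combining the two factors, $\Prob_{\patt{I}{J}}$ is a product of something tending to $0$ with something bounded, hence equals $0$. The only conceptual subtlety is ensuring that the formula of Proposition~\ref{prop:prob-expression} is genuinely valid as an infinite product when $I$ is infinite, rather than being a statement about finite truncations; since the probability of $\bpatt{I}{J}$ is defined as a limit of probabilities under the Markov process $\Sh_k$, and each additional leftmost constraint at a time $i\in I$ multiplies the probability by $q^i \le q < 1$, the monotone decreasing partial probabilities converge to the stated infinite product, which is $0$. This interpretation step is the place I would be most careful to state cleanly, though it follows directly from the construction of $\Sh$ as $\Sh_\infty$.
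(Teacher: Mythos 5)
Your proposal is correct and matches the paper's approach: the paper states this as an immediate consequence of Proposition~\ref{prop:prob-expression} with no written proof, the point being exactly that $\prod_{i\in I}q^i = q^{\sum_{i\in I}i} = 0$ when $I$ is infinite and $0<q<1$, while the factor over $J$ is bounded by $1$. Your extra care about interpreting the formula as a limit of monotone partial probabilities is a reasonable (and correct) elaboration of what the paper leaves implicit.
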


\begin{corollary}\label{coro:I=I'}
	Let $I$ and $I'$ be two finite subsets of $\mathbb{N}_+$, not necessarily disjoint, and let $J$ be a subset of $\mathbb{N}_+$ that is disjoint from both $I$ and $I'$. Suppose that
	\begin{align*}
		\sum_{i\in I} i = \sum_{i'\in I'} i'.
	\end{align*}
	Then
	\begin{align}
		\Prob_{\patt{I}{J}} = \Prob_{\patt{I'}{J}}.
	\end{align}
\end{corollary}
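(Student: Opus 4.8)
The plan is to reduce the entire statement to the explicit product formula furnished by Proposition~\ref{prop:prob-expression}, after which the corollary becomes a one-line consequence of the additivity of exponents. First I would check that the two quantities in the statement are even well-defined. The bracket notation $\bpatt{I}{J}$ was introduced only for disjoint pairs of subsets of $\mathbb{N}_+$, and the hypothesis guarantees exactly this: since $J$ is disjoint from both $I$ and $I'$, the pairs $(I,J)$ and $(I',J)$ are each pairs of disjoint subsets, so $\bpatt{I}{J}$ and $\bpatt{I'}{J}$ are legitimate event spaces attached to $\Sh$. Note that no disjointness between $I$ and $I'$ themselves is needed, and indeed none is assumed.

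Next I would apply Proposition~\ref{prop:prob-expression} to each side separately, obtaining
\begin{align*}
	\Prob_{\patt{I}{J}} = \prod_{i\in I} q^i \prod_{j\in J}(1-q^j),
	\qquad
	\Prob_{\patt{I'}{J}} = \prod_{i'\in I'} q^{i'} \prod_{j\in J}(1-q^j).
\end{align*}
Because $I$ and $I'$ are \emph{finite}, each $q$-product is a finite product and collapses to a single power of $q$, namely $\prod_{i\in I} q^i = q^{\sum_{i\in I} i}$ and $\prod_{i'\in I'} q^{i'} = q^{\sum_{i'\in I'} i'}$. The surviving factor $\prod_{j\in J}(1-q^j)$ is literally the same expression on both sides; it converges since $0<q<1$ (even if $J$ is infinite), and it need never be evaluated, as it will simply be a common factor.

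The only substantive step is then to invoke the hypothesis $\sum_{i\in I} i = \sum_{i'\in I'} i'$, which forces the two $q$-powers to coincide. Combining this with the identical $J$-factor yields $\Prob_{\patt{I}{J}} = \Prob_{\patt{I'}{J}}$, completing the argument. I do not expect any genuine obstacle here: once Proposition~\ref{prop:prob-expression} is available, the statement is immediate from the fact that the probability depends on $I$ only through the sum of its elements. The one mild point worth flagging is the absence of any cross-comparison between elements of $I$ and of $I'$: because the formula treats each event space in isolation, the lack of disjointness between $I$ and $I'$ is harmless, and the equality holds regardless of whether the two sets overlap.
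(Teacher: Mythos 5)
Your argument is correct and is exactly the one the paper intends: the corollary is stated as an immediate implication of Proposition~\ref{prop:prob-expression}, with $\prod_{i\in I} q^i = q^{\sum_{i\in I} i}$ and the common factor $\prod_{j\in J}(1-q^j)$ doing all the work. No differences worth noting.
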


Finally, we recall that, in a probability model, if two event spaces $E_1$ and $E_2$ are disjoint, then
\begin{align*}
	\Prob_{E_1\sqcup E_2} = \Prob_{E_1} + \Prob_{E_2}.
\end{align*}
Therefore, Proposition \ref{prop:IJK-event} implies the following relation.

\begin{proposition}\label{prop:IJK-prob}
	Let $I$ and $J$ be disjoint subsets of $\mathbb{N}_+$. Then for any subset $K$ of $\mathbb{N}_+$ that is disjoint from both $I$ and $J$,
	\begin{align}
		\Prob_{\patt{I}{J}} = \sum_{K'\subset K}\Prob_{\patt{(I\sqcup K')}{(J\sqcup (K\backslash K'))}}.
	\end{align}
	In particular, if $K$ is a singleton, namely, $K=\{k\}$ for a certain $k\in \mathbb{N}_+$, then
	\begin{align}
		\Prob_{\patt{I}{J}} = \Prob_{\patt{(I\sqcup \{k\})}{J}} + \Prob_{\patt{I}{(J\sqcup \{k\})}}.
	\end{align}
\end{proposition}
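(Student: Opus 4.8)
The plan is to transfer the purely set-theoretic decomposition of Proposition~\ref{prop:IJK-event} directly to the level of probabilities, the only analytic input being the additivity of $\Prob$ over disjoint event spaces recalled immediately above the statement. Since Proposition~\ref{prop:IJK-event} has already done the combinatorial work, essentially nothing further is needed beyond taking $\Prob$ of both sides of a disjoint-union identity.

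First I would invoke Proposition~\ref{prop:IJK-event}, which presents $\bpatt{I}{J}$ as the \emph{disjoint} union
\begin{align*}
	\bpatt{I}{J} = \bigsqcup_{K'\subset K}\bpatt{(I\sqcup K')}{(J\sqcup (K\backslash K'))}.
\end{align*}
The disjointness here is the essential feature: whenever $K'\ne K''$ are two subsets of $K$, they differ at some $k\in K$, which is a leftmost index for one of the corresponding event spaces and an anti-leftmost index for the other, so Proposition~\ref{prop:disjoint} guarantees the two pieces are disjoint. Applying $\Prob$ to both sides and using $\Prob_{E_1\sqcup E_2}=\Prob_{E_1}+\Prob_{E_2}$ across the family of summands then yields
\begin{align*}
	\Prob_{\patt{I}{J}} = \sum_{K'\subset K}\Prob_{\patt{(I\sqcup K')}{(J\sqcup (K\backslash K'))}},
\end{align*}
which is the general assertion. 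For the singleton case $K=\{k\}$, the only subsets $K'$ of $K$ are $\varnothing$ and $\{k\}$; substituting both recovers the two-term formula
\begin{align*}
	\Prob_{\patt{I}{J}} = \Prob_{\patt{(I\sqcup \{k\})}{J}} + \Prob_{\patt{I}{(J\sqcup \{k\})}}.
\end{align*}

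The one point that requires care — and the closest thing to an obstacle — is that when $K$ is infinite the index set $\{K':K'\subset K\}$ is infinite, so one must pass from the two-term additivity recalled above to a countable version. Here I would note that by Corollary~\ref{coro:I-infinite} every summand for which $I\sqcup K'$ is infinite vanishes, leaving only the (countably many) summands with $K'$ finite; as these are nonnegative and their partial sums are bounded above by $\Prob_{\patt{I}{J}}\le 1$, the extension of additivity from the finite to the countable setting is routine. In applications one only ever uses the singleton form, where the subtlety does not even arise.
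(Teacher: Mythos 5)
Your proposal is correct and follows essentially the same route as the paper, which likewise obtains the identity by applying the additivity of $\Prob$ over disjoint unions to the decomposition of Proposition~\ref{prop:IJK-event}. Your additional remark on handling infinite $K$ (via Corollary~\ref{coro:I-infinite} and bounded partial sums) is a reasonable refinement of a point the paper passes over silently.
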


\section{The pentagonal number theorem}

The key to our probabilistic proof of the pentagonal number theorem is the following relation.

\begin{theorem}
	For every nonnegative integer $N$,
	\begin{align}\label{eq:prob-induction}
		\Prob_{\patt{\varnothing}{[1,\infty)}} &= \sum_{n=0}^N (-1)^n \big(\Prob_{\patt{[n+1,2n]}{\varnothing}} - \Prob_{\patt{[n+1,2n+1]}{\varnothing}}\big)\notag\\
		&\qquad\qquad\qquad\qquad - (-1)^{N} \sum_{k\ge 1} \Prob_{\patt{[k+1+N,k+1+2N]}{[1+N,k+N]}}.
	\end{align}
\end{theorem}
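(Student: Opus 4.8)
The plan is to argue by induction on $N$. Write $L := \Prob_{\patt{\varnothing}{[1,\infty)}}$ for the (fixed) left-hand side, let $S_N$ denote the alternating sum $\sum_{n=0}^N(-1)^n\big(\Prob_{\patt{[n+1,2n]}{\varnothing}} - \Prob_{\patt{[n+1,2n+1]}{\varnothing}}\big)$, and abbreviate the remainder series as
\begin{align*}
	R_N := \sum_{k\ge1}\Prob_{\patt{[k+1+N,k+1+2N]}{[1+N,k+N]}},
\end{align*}
so that the assertion \eqref{eq:prob-induction} reads $L = S_N - (-1)^N R_N$. Comparing the statements for $N$ and $N+1$, and using that the $n=N+1$ summand of $S_{N+1}$ equals $(-1)^{N+1}\big(\Prob_{\patt{[N+2,2N+2]}{\varnothing}} - \Prob_{\patt{[N+2,2N+3]}{\varnothing}}\big)$, the inductive step collapses to the single recurrence
\begin{align*}
	R_N + R_{N+1} = \Prob_{\patt{[N+2,2N+2]}{\varnothing}} - \Prob_{\patt{[N+2,2N+3]}{\varnothing}},
\end{align*}
which, once the base case is secured, is the real content to be proved.

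For the base case $N=0$ I would iterate the singleton form of Proposition~\ref{prop:IJK-prob}: taking $K=\{k+1\}$ in the event $\patt{\varnothing}{[1,k]}$ gives $\Prob_{\patt{\varnothing}{[1,k+1]}} = \Prob_{\patt{\varnothing}{[1,k]}} - \Prob_{\patt{\{k+1\}}{[1,k]}}$. Telescoping from $k=1$ yields $\Prob_{\patt{\varnothing}{[1,n]}} = (1-q) - \sum_{k=1}^{n-1}\Prob_{\patt{\{k+1\}}{[1,k]}}$, and letting $n\to\infty$ (the partial products $\Prob_{\patt{\varnothing}{[1,n]}} = \prod_{j=1}^n(1-q^j)$ converge since $0<q<1$, by Proposition~\ref{prop:prob-expression}, and the nonnegative series has bounded partial sums) produces exactly $L = (1-q) - R_0$. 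Here one uses the convention $[1,0]=\varnothing$, so that the $N=0$ instance of \eqref{eq:prob-induction} is precisely this identity.

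To establish the recurrence I would pass to the explicit formula of Proposition~\ref{prop:prob-expression}. Writing $P_m := \prod_{j=1}^m(1-q^j)$ and reindexing by $m = k+N$, each summand of $R_N$ becomes a monomial in $q$ times $P_m/P_N$; the combined sum $R_N+R_{N+1}$ then telescopes through the elementary relation $P_m - P_{m-1} = -q^m P_{m-1}$ — which is itself the singleton Proposition~\ref{prop:IJK-prob} applied to $\patt{\varnothing}{[1,m-1]}$ with $K=\{m\}$ — so that all interior terms cancel and only two boundary contributions survive. A short computation with Proposition~\ref{prop:prob-expression} identifies these as $\Prob_{\patt{[N+2,2N+2]}{\varnothing}}$ and $-\,\Prob_{\patt{[N+2,2N+3]}{\varnothing}}$, i.e.\ exactly the two generalized pentagonal monomials, which is the recurrence. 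Equivalently one may keep everything at the level of event spaces, splitting each remainder term by Proposition~\ref{prop:IJK-prob} and relabelling the leftmost block by the equal-sum invariance of Corollary~\ref{coro:I=I'}; the two routes are interchangeable because the telescoping relation above is a disguised instance of the splitting.

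The main obstacle I anticipate is the bookkeeping in this telescoping: the series $R_N$ and $R_{N+1}$ are indexed differently — their leftmost blocks $[k+1+N,k+1+2N]$ and $[k+2+N,k+3+2N]$ have $N+1$ and $N+2$ entries, respectively — so one must align the index shifts precisely for the interior cancellation to occur and for the two pentagonal boundary terms, and nothing else, to remain. A secondary, routine point is justifying the interchange of limit and summation in the base case and the convergence of each remainder series, both of which follow from $0<q<1$ together with the product formula of Proposition~\ref{prop:prob-expression}.
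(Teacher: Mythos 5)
Your proposal is correct and follows essentially the same route as the paper: induction on $N$, with the inductive step reduced to the recurrence $R_N + R_{N+1} = \Prob_{\patt{[N+2,2N+2]}{\varnothing}} - \Prob_{\patt{[N+2,2N+3]}{\varnothing}}$, which the paper likewise establishes by two applications of the singleton case of Proposition~\ref{prop:IJK-prob} together with the equal-sum relabelling of Corollary~\ref{coro:I=I'} (your ``event-space'' route); your explicit telescoping route is just the $q$-series shadow of the same computation, and note that it genuinely requires the relation $P_m=P_{m-1}-q^mP_{m-1}$ to be applied at \emph{two} positions plus the exponent reindexing, not a single telescoping pass. The only cosmetic difference is the base case, where you telescope $\Prob_{\patt{\varnothing}{[1,n]}}$ and pass to the limit instead of decomposing $\Omega$ by the first leftmost shuffling time, but these amount to the same decomposition.
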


We show this result by induction on $N$.

\begin{proof}[Base case]
	We need to prove the case where $N=0$:
	\begin{align*}
		\Prob_{\patt{\varnothing}{[1,\infty)}} &= \Prob_{\patt{\varnothing}{\varnothing}} - \Prob_{\patt{\{1\}}{\varnothing}} - \sum_{k\ge 1} \Prob_{\patt{\{k+1\}}{[1,k]}}\\
		&= \Prob_{\patt{\varnothing}{\varnothing}} - \sum_{k\ge 1} \Prob_{\patt{\{k\}}{[1,k-1]}}.
	\end{align*}
	Note that $\bpatt{\varnothing}{\varnothing}$ contains all shuffles of $\mathbb{N}$ so $\bpatt{\varnothing}{\varnothing} = \Omega$. Meanwhile, given an arbitrary shuffle $\mathfrak{s}\in \Omega$, we project it onto each model $\Sh_k$ to get an outcome shuffle $\mathfrak{s}_k \in \Omega_k$ at Time $k$. Then either every $\mathfrak{s}_k$ is produced by an anti-leftmost shuffling (this case corresponds to $\bpatt{\varnothing}{[1,\infty)}$) or there exists a unique time $k$ such that $\mathfrak{s}_k$ is the first shuffle produced by a leftmost shuffling (this case corresponds to $\bpatt{\{k\}}{[1,k-1]}$). Furthermore, given two different positive integers $k$ and $k'$, we assume without loss of generality that $k<k'$. Then $k\in \{k\}$ and $k\in [1,k'-1]$. In light of Proposition \ref{prop:disjoint},
	\begin{align*}
		\bpatt{\{k\}}{[1,k-1]} \cap \bpatt{\{k'\}}{[1,k'-1]} = \varnothing.
	\end{align*}
	Hence,
	\begin{align*}
		\bpatt{\varnothing}{\varnothing} = \bpatt{\varnothing}{[1,\infty)} \sqcup \left(\bigsqcup_{k\ge 1} \bpatt{\{k\}}{[1,k-1]}\right).
	\end{align*}
	Application of Proposition \ref{prop:IJK-prob} leads us to the required relation.
\end{proof}

\begin{proof}[Inductive step]
	It suffices to show that, for each nonnegative integer $N$,
	\begin{align*}
		\sum_{k\ge 1} \Prob_{\patt{[k+1+N,k+1+2N]}{[1+N,k+N]}} &= \Prob_{\patt{[2+N,2+2N]}{\varnothing}} - \Prob_{\patt{[2+N,3+2N]}{\varnothing}}\\
		&\quad - \sum_{k\ge 1} \Prob_{\patt{[k+2+N,k+3+2N]}{[2+N,k+1+N]}}.
	\end{align*}
	We start by noting that, for each $k\ge 1$, the sets $[k+1+N,k+1+2N]$, $[2+N,k+N]$ and $\{1+N\}$ are pairwise disjoint. It follows from Proposition \ref{prop:IJK-prob} that
	\begin{align*}
		\sum_{k\ge 1} \Prob_{\patt{[k+1+N,k+1+2N]}{[1+N,k+N]}} &= \sum_{k\ge 1} \Prob_{\patt{[k+1+N,k+1+2N]}{[2+N,k+N]}}\\
		&\quad - \sum_{k\ge 1} \Prob_{\patt{(\{1+N\}\sqcup[k+1+N,k+1+2N])}{[2+N,k+N]}}.
	\end{align*}
	We split the first sum on the right-hand side as
	\begin{align*}
		\sum_{k\ge 1} \Prob_{\patt{[k+1+N,k+1+2N]}{[2+N,k+N]}} &= \Prob_{\patt{[2+N,2+2N]}{\varnothing}}\\
		&\quad + \sum_{k\ge 1} \Prob_{\patt{[k+2+N,k+2+2N]}{[2+N,k+1+N]}}.
	\end{align*}
	We further note that the sets $[k+2+N,k+2+2N]$, $[2+N,k+N]$ and $\{k+1+N\}$ are pairwise disjoint. Another application of Proposition \ref{prop:IJK-prob} yields
	\begin{align*}
		\sum_{k\ge 1} \Prob_{\patt{[k+2+N,k+2+2N]}{[2+N,k+1+N]}} &= \sum_{k\ge 1} \Prob_{\patt{[k+2+N,k+2+2N]}{[2+N,k+N]}}\\
		&\quad - \sum_{k\ge 1} \Prob_{\patt{[k+1+N,k+2+2N]}{[2+N,k+N]}}.
	\end{align*}
	Finally, we see from Corollary \ref{coro:I=I'} that
	\begin{align*}
		\sum_{k\ge 1} \Prob_{\patt{(\{1+N\}\sqcup[k+1+N,k+1+2N])}{[2+N,k+N]}} = \sum_{k\ge 1} \Prob_{\patt{[k+2+N,k+2+2N]}{[2+N,k+N]}}.
	\end{align*}
	This implies that
	\begin{align*}
		\sum_{k\ge 1} \Prob_{\patt{[k+1+N,k+1+2N]}{[1+N,k+N]}} &= \Prob_{\patt{[2+N,2+2N]}{\varnothing}}\\
		&\quad - \sum_{k\ge 1} \Prob_{\patt{[k+1+N,k+2+2N]}{[2+N,k+N]}}.
	\end{align*}
	By singling out the $k=1$ term of the sum in the above expression, we obtain the desired part $\Prob_{\patt{[2+N,3+2N]}{\varnothing}}$.
\end{proof}

Now we are ready to conclude the pentagonal number theorem.

\begin{proof}[Proof of Theorem \ref{th:PNT}]
	Letting $N\to \infty$ in \eqref{eq:prob-induction} and recalling Corollary \ref{coro:I-infinite}, we have
	\begin{align*}
		\Prob_{\patt{\varnothing}{[1,\infty)}} = \sum_{n\ge 0} (-1)^n \big(\Prob_{\patt{[n+1,2n]}{\varnothing}} - \Prob_{\patt{[n+1,2n+1]}{\varnothing}}\big).
	\end{align*}
	Application of Proposition \ref{prop:prob-expression} yields
	\begin{align*}
		\prod_{j\ge 1} (1-q^j) = \sum_{n\ge 0} (-1)^n \big(q^{\frac{n(3n+1)}{2}} - q^{\frac{(n+1)(3n+2)}{2}}\big).
	\end{align*}
	This is exactly \eqref{eq:PNT}.
\end{proof}

\section{Epilogue}

Before putting down the curtain, let us flash back to Euler's original proof of the pentagonal number theorem in \cite{Eul1760}. Adopting the \emph{$q$-Pochhammer symbol} for $n\in\mathbb{N}\cup\{\infty\}$,
\begin{align*}
	(A;q)_n&:=\prod_{j=0}^{n-1} (1-Aq^{j}),
\end{align*}
the key observation of Euler, although not explicitly stated, is the following.

\begin{theorem}
	For every nonnegative integer $N$,
	\begin{align}\label{eq:Euler-induction}
		(q;q)_\infty &= \sum_{n=0}^N (-1)^n \big(q^{\frac{n(3n+1)}{2}} - q^{\frac{(n+1)(3n+2)}{2}}\big)\notag\\
		&\quad - (-1)^{N} \sum_{k\ge 1} q^{\frac{(1+N)(2k+2+3N)}{2}} (q^{1+N};q)_k.
	\end{align}
\end{theorem}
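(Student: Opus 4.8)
The plan is to recognize \eqref{eq:Euler-induction} as nothing more than the probabilistic identity \eqref{eq:prob-induction} rewritten through the dictionary supplied by Proposition \ref{prop:prob-expression}. That proposition gives $\Prob_{\patt{I}{J}} = \prod_{i\in I}q^i\prod_{j\in J}(1-q^j)$, so my first step is to evaluate in closed form each probability appearing in \eqref{eq:prob-induction} and then check that the result is term-by-term the right-hand side of \eqref{eq:Euler-induction}. The left-hand side is immediate: with $I=\varnothing$ and $J=[1,\infty)$ one gets $\Prob_{\patt{\varnothing}{[1,\infty)}} = \prod_{j\ge 1}(1-q^j)=(q;q)_\infty$.

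For the finite sum I would compute the two exponent sums over intervals of consecutive integers. Summing $[n+1,2n]$ gives $\sum_{i=n+1}^{2n} i = \tfrac{n(3n+1)}{2}$, whence $\Prob_{\patt{[n+1,2n]}{\varnothing}}=q^{n(3n+1)/2}$; summing $[n+1,2n+1]$ adjoins the term $2n+1$ and yields $\tfrac{(n+1)(3n+2)}{2}$, so $\Prob_{\patt{[n+1,2n+1]}{\varnothing}}=q^{(n+1)(3n+2)/2}$. These reproduce exactly the bracketed difference in the first sum of \eqref{eq:Euler-induction}.

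The only genuinely computational point is the tail term. Here $I=[k+1+N,k+1+2N]$ is an interval of $N+1$ consecutive integers, so its sum is $\tfrac{(N+1)(2k+2+3N)}{2}$, giving the prefactor $q^{(1+N)(2k+2+3N)/2}$. For the $J$-part I would observe that $J=[1+N,k+N]$ consists of the $k$ integers $1+N,2+N,\dots,k+N$, so that $\prod_{j=1+N}^{k+N}(1-q^j)=\prod_{m=0}^{k-1}(1-q^{1+N}q^{m})=(q^{1+N};q)_k$ by the very definition of the $q$-Pochhammer symbol. Hence $\Prob_{\patt{[k+1+N,k+1+2N]}{[1+N,k+N]}}=q^{(1+N)(2k+2+3N)/2}(q^{1+N};q)_k$, which matches the tail sum including its sign $-(-1)^N$. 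Substituting all four evaluations into \eqref{eq:prob-induction} and comparing with \eqref{eq:Euler-induction} then finishes the argument.

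I do not expect a real obstacle: every step is a closed-form evaluation, and the ``hard part'' is merely the bookkeeping of correctly counting the $N+1$ terms of the $I$-interval and the $k$ terms of the $J$-interval, and identifying the latter product with $(q^{1+N};q)_k$. As a sanity check I would verify the edge case $n=0$, where the empty interval $[1,0]$ contributes the empty product $q^0=1$, to be sure the indexing is consistent. I would also remark that this confirms that Euler's original manipulation is the exact shadow of the shuffling argument: under Proposition \ref{prop:prob-expression} the splitting step of Proposition \ref{prop:IJK-prob} collapses to the triviality $q^k+(1-q^k)=1$, and the symmetry of Corollary \ref{coro:I=I'} collapses to $q^{\sum_{i\in I} i}=q^{\sum_{i'\in I'} i'}$, so one could equally well prove \eqref{eq:Euler-induction} by a direct induction on $N$ mirroring the proof of \eqref{eq:prob-induction}, with these two trivialities standing in for the two probabilistic lemmas.
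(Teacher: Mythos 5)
Your proposal is correct and matches the paper's own treatment: the paper likewise derives \eqref{eq:Euler-induction} by applying Proposition \ref{prop:prob-expression} term by term to \eqref{eq:prob-induction}, with the only computation being the evaluation $\Prob_{\patt{[k+1+N,k+1+2N]}{[1+N,k+N]}} = q^{\frac{(1+N)(2k+2+3N)}{2}} (q^{1+N};q)_k$ for the tail term, exactly as you do. Your closing remark that Euler's manipulation is the ``shadow'' of the shuffling argument is also precisely the point the paper makes in its epilogue.
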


It is worth noting that our proof can be understood as a translation of Euler's proof into the language of classical probability theory. To tie the knot, we simply apply Proposition \ref{prop:prob-expression} and derive that
\begin{align*}
	\Prob_{\patt{[k+1+N,k+1+2N]}{[1+N,k+N]}} &= \prod_{i=k+1+N}^{k+1+2N}q^i \prod_{j=1+N}^{k+N} (1-q^j)\\
	& = q^{\frac{(1+N)(2k+2+3N)}{2}} (q^{1+N};q)_k.
\end{align*}
In view of \eqref{eq:prob-induction}, this is clearly the last piece of expression required for \eqref{eq:Euler-induction}.

\subsection*{Acknowledgements}

This work was supported by the Austrian Science Fund (No.~10.55776/F1002). I would like to thank the anonymous referee for the inspiring suggestions that have improved the quality of this paper to a great extent. I am also grateful to Hugh Thomas for the comments on Proposition \ref{prop:IJK-prob} in an earlier draft.

\bibliographystyle{amsplain}

\begin{thebibliography}{9}
	
	\bibitem{And1983}
	G. E. Andrews, Euler's pentagonal number theorem, \textit{Math. Mag.} \textbf{56} (1983), no. 5, 279--284.
	
	\bibitem{Bel2010}
	J. Bell, A summary of Euler's work on the pentagonal number theorem, \textit{Arch. Hist. Exact Sci.} \textbf{64} (2010), no. 3, 301--373.
	
	\bibitem{Eul1760}
	L. Euler, Demonstratio theorematis circa ordinem in summis divisorum observatum, in: \textit{Novi Commentarii Academiae Scientiarum Imperialis Petropolitanae, Tom.~V}, 75--83, Academiae Scientiarvm, Petropoli, 1760.
	
	\bibitem{Eul2015}
	L. Euler, \textit{Opera omnia. Series Quarta A: Commercium epistolicum. Vol.~IV: Leonhardi Euleri commercium epistolicum cum Christiano Goldbach}, Birkh\"auser, Basel, 2015.
	
	\bibitem{Eul2016}
	L. Euler, \textit{Opera omnia. Series Quarta A: Commercium epistolicum. Vol.~III: Briefwechsel mit Daniel Bernoulli. Teil I: Einleitungen, Briefwechsel 1726--1743}, Birkh\"auser, Cham, 2016.
	
	\bibitem{Fra1881}
	F. Franklin, Sure le d\'eveloppement du produit infini $(1 - x)(1 - x^2)(1 - x^3)(1 - x^4)\ldots$, \textit{C. R. Acad. Sci. Paris S\'er. A} \textbf{92} (1881), 448--450.
	
	\bibitem{Gon1973}
	L. V. Goncharova, The cohomologies of Lie algebras of formal vector fields on the line, \textit{Funct. Anal. Appl.} \textbf{7} (1973), no. 2, 91--97.
	
	\bibitem{Leg1830}
	A. M. Legendre, \textit{Th\'eorie des nombres. Vol.~II. Third edition}, Chez Firmin Didot Fr\`ere, Paris, 1830. (Reprinted: Blanchard, Paris, 1955).
	
\end{thebibliography}

\end{document}